\numberwithin{equation}{section}
\newcommand{\R}{\mathbb{R}}
\newtheorem{theorem}{Theorem}[section]
\newtheorem{lemma}[theorem]{Lemma}
\newtheorem{proposition}[theorem]{Proposition}
\theoremstyle{definition}
\newtheorem{remark}[theorem]{Remark}
\newcommand{\Extend}[5]{\ext@arrow0099{\arrowfill@#1#2#3}{#4}{#5}}
\begin{document}
\title[dispersive estimates for fourth order operators]{Dispersive estimates of fourth order Schr\"odinger operators with scaling-critical magnetic potentials in dimension two}

\author{Haoran Wang}
\address{Department of Mathematics, Beijing Institute of Technology, Beijing 100081, China;}
\email{wanghaoran@bit.edu.cn}

\begin{abstract}
Dispersive estimate for the fourth order Schr\"odinger operator with a class of scaling-critical magnetic potentials in dimension two was obtained by the construction of the corresponding resolvent kernel and the stationary phase method.
\end{abstract}

\maketitle

\begin{center}
 \begin{minipage}{120mm}
   { \small {\bf Key Words: Dispersive estimate, Higher order Schr\"odinger operator, Scaling-critical magnetic potential, Resolvent kernel}
      {}
   }\\
    { \small {\bf AMS Classification:}
      { 42B37, 47A10, 35J10.}
      }
 \end{minipage}
 \end{center}

\section{Introduction}

Consider the fourth order Schr\"odinger operator with a scaling-critical magnetic potential on $\R^2$
\begin{equation}\label{op}
\mathcal{L}_{\mathbf{A},4}=\left(-i\nabla+\frac{\mathbf{A}(\hat{x})}{|x|}\right)^4,\quad \hat{x}=\frac{x}{|x|}\in\mathbb{S}^1,\quad x\in\R^2\setminus\{0\},
\end{equation}
where $\mathbf{A}\in W^{1,\infty}(\mathbb{S}^1;\R^2)$ is a vector function fulfilling the transversality condition
\begin{equation}\label{cond:trans}
\mathbf{A}(\hat{x})\cdot\hat{x}\equiv0,\quad \forall\hat{x}\in\mathbb{S}^1.
\end{equation}
A typical example of $\mathbf{A}$ is given by the Aharonov-Bohm potential
\begin{equation*}
\mathbf{A}(\hat{x})=\alpha|x|^{-1}(-x_2,x_1),\quad x=(x_1,x_2)\in\R^2\setminus\{0\},\quad \alpha\in\R\setminus\mathbb{Z}.
\end{equation*}
If we write $\mathbf{A}(\hat{x})=(A_1,A_2)(\hat{x})$, then the usual second order operator $\mathcal{L}_{\mathbf{A},2}=(-i\nabla+\mathbf{A}/|x|)^2$ represents the quantum Hamiltonian of a non-relativistic electron interacting with the singular magnetic field $B(x)$
\begin{equation*}
B(x)=\frac{\partial}{\partial x_1}\frac{A_2(\hat{x})}{|x|}-\frac{\partial}{\partial x_2}\frac{A_1(\hat{x})}{|x|},\quad x\in\R^2.
\end{equation*}
Since the potential $\frac{\mathbf{A}(\hat{x})}{|x|}$ is singular at zero, we should extend the operator $\mathcal{L}_{\mathbf{A},2}$ to a self-adjoint operator on $L^2(\R^2)$ so that the Schr\"odinger semigroup $e^{-it\mathcal{L}_{\mathbf{A},4}}$ generated by $\mathcal{L}_{\mathbf{A},4}$ (in the spectral sense) is well-defined. To be specific, we regard $\mathcal{L}_{\mathbf{A},2}$ as the Friedrichs extension of the symmetric operator $(-i\nabla+\mathbf{A}/|x|)^2$ (initially defined on $C_c^\infty(\R^2\setminus\{0\})$) throughout the whole paper. Hence, the spectrum of the Hamiltonian $\mathcal{L}_{\mathbf{A},4}$ consists only of the absolutely continuous part $[0,+\infty)$.
If $\mathbf{A}\equiv\mathbf{0}$, then $\mathcal{L}_{\mathbf{A},4}$ reduces to the free fourth order Schr\"odinger operator $\mathcal{L}_{\mathbf{0},4}=\Delta^2$. 
By the Fourier transform, one can easily derive the sharp pointwise time-decay estimate for any $x,y\in\R^d$ (see e.g. \cite{BKS00})
\begin{equation}\label{dec:point}
|e^{-it\Delta^2}(x,y)|\lesssim(1+|t|^{-\frac{1}{4}}|x-y|)^{-\frac{d}{3}}|t|^{-\frac{d}{4}},t\neq0,
\end{equation}
which, together with Young's inequality, immediately yields the dispersive estimate for $e^{-it\Delta^2}$
\begin{equation}\label{dis:free}
 \|e^{-it\Delta^2}\|_{L^1(\R^d)\rightarrow L^\infty(\R^d)}\lesssim|t|^{-\frac{d}{4}},\quad t\neq0.
\end{equation}
The dispersive estimates for the second order Schr\"odinger operator (with the general form $(-i\nabla+\vec{A}(x))^2+V(x)$) have been extensively studied in all dimensions (see e.g. \cite{BG12,BS19,BS20,EGG14,FFFP13,FFFP15,GYZZ22,FZZ22,MSZ23,Sch21} and the references therein). For the higher order cases, however, the picture becomes rather unclear and there are a lot of situations to be considered. In fact, the investigations concerning the higher order Schr\"odinger operators are absolutely hot topics in harmonic analysis, at least in the last few years. In particular, several authors obtained various dispersive estimates for the higher order operators $(-\Delta)^m+V$ with a decaying potential $V$ in different dimensions recently. For example, the dispersive estimate \eqref{dis:free} for $\Delta^2+V$ was obtained via the Wiener inversion technique by Hill \cite{Hil20} in dimension one. Erdo\u{g}an-Goldberg-Green \cite{EGG23} proved the dispersive bounds for $(-\Delta)^m+V$ with $m>1$ in higher dimensions $d\in(2m,4m)$ via the same method. For the fourth order operator $\Delta^2+V$, Feng-Soffer-Yao \cite{FSY18} established the Jensen-Kato bound $(1+|t|)^{-\frac{d}{4}}$ in dimensions $d\geq5$ and the local dispersive bound $O(|t|^{-\frac{1}{2}})$ in dimension three by the asymptotic expansion of the perturbed resolvent $R_V(z)=(\Delta^2+V-z)^{-1}$ at the zero threshold provided that zero is a regular point. Based on the high energy estimates for the perturbed resolvent $R_V(z)$, the authors of \cite{SWY22,LSY23,EGT19,GT19} obtained the global dispersive estimates for $\Delta^2+V$ in dimensions one, two, three and four, respectively, permitting the presence of a zero resonance or eigenvalue. Feng-Soffer-Wu-Yao \cite{FSWY20} proved local dispersive estimates for $(-\Delta)^m+V$ in dimensions $d>2m$ with a decaying potential $|V(x)|\lesssim(1+|x|)^{-\beta}$ for some $\beta>d$. Global dispersive estimate was obtained as a direct consequence of the $L^p$-boundedness of the associated wave operators by Erdo\u{g}an-Green \cite{EG22} for $\beta>d+3$ in odd dimension $d$ and \cite{EG23} for $\beta>d+4$ in even dimension $d$. It turns out that certain measure of smoothness is indispensable to guarantee the validity of the global dispersive bound for $(-\Delta)^m+V$ in high dimensions $d\geq4m$ (see \cite{EG22,EGG23}). Very recently, in a preprint \cite{EGG24}, Erdo\u{g}an-Goldberg-Green established the dispersive estimate \eqref{dis:free} for $(-\Delta)^m+V(1<m\in\mathbb{N})$ with scaling-critical potentials $V$ in high dimensions $d\in(2m,4m)$ again via the Wiener inversion theorem. For the second order operator $-\Delta+V$ in dimension two, Erdo\u{g}an-Green obtained a weighted dispersive bound in \cite{EG13a} and dispersive estimates with obstructions at zero energy in \cite{EG13b}; Erdo\u{g}an-Goldberg-Green established the $L^p$-boundedness of the associated wave operators with threshold obstructions in \cite{EGG18} (for the regular case, see Schlag's earlier paper \cite{Sch05}). Much more recently, Li-Soffer-Yao \cite{LSY23} proved the dispersive estimate \eqref{dis:free} for $\Delta^2+V$ with a decaying regular potential $V$ in dimension two. For more considerations of fourth order operator $\Delta^2+V$ in dimensions one and three,
we refer the interested readers to e.g. \cite{Hil20,MWY22,MWY23,MWY24,SWY22}. 

In this paper, we show that the Schr\"odinger propagator $e^{-it\mathcal{L}_{\mathbf{A},4}}$ satisfies the same dispersive estimate as \eqref{dis:free} for a class of scaling-critical magnetic potentials in dimension two. On the one hand, the two-dimensional situation is quite special in the sense that $\R^2$ is the lowest dimensional Euclidean space for which the addition of a magnetic potential is nontrivial since the free operator $\mathcal{L}_{\mathbf{0},4}$ is very sensitive to small perturbations. For example, the spectrum of the purely magnetic operator $(-i\nabla+\vec{A}(x))^{2}$ can be quite general, ranging from the purely absolutely continuous spectrum $[0,\infty)$ for the compactly supported magnetic field $B(x)$ (e.g. the Aharonov-Bohm solenoid), through the discrete set of infinitely degenerated eigenvalues (Landau levels) for the constant field $B(x)\equiv B\neq0$, to purely discrete spectrum for $|B(x)|\rightarrow\infty,|x|\rightarrow\infty$ (magnetic bottles); we refer the interested reader to \cite{FKV18} on how to eliminate these eigenvalues. For the Schr\"odinger operator $-\Delta+V$, Kato \cite{Kat59} established the absence of positive eigenvalues for a bounded decaying potential $V=o(|x|^{-1})$ as $|x|\rightarrow+\infty$. For more about the absence of positive eigenvalues of Schr\"odinger operators, we refer to \cite{FHHH82,IJ03,KT06,Sim69}. On the other hand, due to the degeneracy of the higher order free operator $\Delta^{m}(m\geq1)$ at zero threshold and in lower even dimension, it turns out that the asymptotic expansions of the perturbed resolvent $R_V(z)$ at zero are more complicated than the usual Schr\"odinger operator $-\Delta+V$ in dimension two (see \cite{JN01,LSY23}). For instance, there exists a compactly supported smooth potential $V$ such that $\Delta^2+V$ admits a positive eigenvalue and if the potential $V$ is bounded and repulsive (i.e. satisfying $x\cdot\nabla\leq0$), then $\Delta^2+V$ does not admit positive eigenvalues (see e.g. \cite{FSWY20}). Moreover, for a general self-adjoint operator $H$ on $L^2(\R^d)$, Costin-Soffer \cite{CS01} proved that $H+\epsilon V$ can kick off the eigenvalue located in a small interval for a generic small potential $\epsilon V$ even if $H$ only admits a simple embedded eigenvalue. Fortunately, the Schr\"odinger operator with a scaling-critical magnetic potential has neither negative nor embedded eigenvalues and the resolvent of $\mathcal{L}_{\mathbf{A},2}$ is regular at zero, although its resolvent kernel is a little more complicated than the free case (see e.g. \cite{FZZ23,GYZZ22}). 
Throughout the whole paper, it will be convenient to write $X\lesssim Y$ if there exists some constant $C>0$ such that $X\leq CY$. Now we state the main result.

\begin{theorem}\label{thm:dis}
Let $\mathcal{L}_{\mathbf{A},4}$ be the self-adjoint Schr\"odinger operator with a scaling-critical magnetic potential given by \eqref{op}, then we have the dispersive estimate
\begin{equation}\label{dis:S}
\|e^{-it\mathcal{L}_{\mathbf{A},4}}\|_{L^1(\R^2)\rightarrow L^\infty(\R^2)}\lesssim|t|^{-\frac{1}{2}},\quad t\neq0.
\end{equation}
\end{theorem}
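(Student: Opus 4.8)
The plan is to reduce the fourth-order flow to the spectral calculus of the second-order magnetic operator and then run a one-dimensional stationary phase argument. The starting point is the factorization $\mathcal{L}_{\mathbf{A},4}=\mathcal{L}_{\mathbf{A},2}^2$ with $\mathcal{L}_{\mathbf{A},2}=(-i\nabla+\mathbf{A}(\hat x)/|x|)^2$. A preliminary reduction removes the general profile of $\mathbf{A}$: writing the (tangential) potential as $\psi(\theta)$ and setting the flux $\Phi=\frac{1}{2\pi}\int_0^{2\pi}\psi\,d\theta$, the radial gauge $U=e^{ig(\theta)}$ with $g'=\Phi-\psi$ is single valued (because $\int_0^{2\pi}(\Phi-\psi)\,d\theta=0$) and conjugates $\mathcal{L}_{\mathbf{A},2}$ to the Aharonov--Bohm operator with constant flux $\Phi$. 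Since $|U|\equiv1$, the multiplier $U$ does not affect any $L^1\to L^\infty$ bound, so I may assume $\mathbf{A}$ is of Aharonov--Bohm type. If $\Phi\in\mathbb{Z}$ the operator is gauge equivalent to $\Delta^2$ and \eqref{dis:S} is just \eqref{dis:free}; hence the genuine case is $\Phi\notin\mathbb{Z}$, for which the angular orders $\beta_k:=|k+\Phi|$ satisfy $\beta_k\ge\mathrm{dist}(\Phi,\mathbb{Z})>0$ for every $k\in\mathbb{Z}$, so that no Bessel function of order zero occurs.

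Next I would construct the kernel. Separating variables in polar coordinates $x=r\hat x(\theta)$, $y=s\hat y(\phi)$, the operator $\mathcal{L}_{\mathbf{A},2}$ decomposes over the angular modes $e^{ik\theta}$ into Bessel operators of order $\beta_k$, whose generalized eigenfunctions have radial part $J_{\beta_k}(\lambda r)$. Writing the spectral resolution of $\mathcal{L}_{\mathbf{A},2}$ in the square-root variable $\lambda$ (so that $\mathcal{L}_{\mathbf{A},4}$ acts as $\lambda^4$), the spectral theorem gives
\begin{equation*}
e^{-it\mathcal{L}_{\mathbf{A},4}}(x,y)=\frac{1}{2\pi}\int_0^\infty e^{-it\lambda^4}\,\lambda\sum_{k\in\mathbb{Z}}e^{ik(\theta-\phi)}J_{\beta_k}(\lambda r)J_{\beta_k}(\lambda s)\,d\lambda,
\end{equation*}
which is exactly the resolvent/spectral kernel advertised in the abstract (the sum is the jump across the spectrum of $(\mathcal{L}_{\mathbf{A},2}-(\lambda^2\pm i0))^{-1}$, whose radial Green's function is $\tfrac{i\pi}{2}J_{\beta_k}(\lambda r_<)H^{(1)}_{\beta_k}(\lambda r_>)$). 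Rescaling $\lambda\mapsto|t|^{-1/4}\lambda$ pulls out the factor $|t|^{-1/2}$ and reduces \eqref{dis:S} to the uniform bound
\begin{equation*}
\Big|\int_0^\infty e^{\mp i\lambda^4}\,\lambda\sum_{k\in\mathbb{Z}}e^{ik(\theta-\phi)}J_{\beta_k}(\lambda r)J_{\beta_k}(\lambda s)\,d\lambda\Big|\lesssim1,
\end{equation*}
uniformly in $r,s\ge0$ and in the angles.

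To control the angular series I would resum it into the explicit Aharonov--Bohm resolvent kernel, which decomposes into a finite sum of free densities $J_0(\lambda\rho_j)$ at finitely many effective distances $\rho_j$ (the geometric contributions, reproducing the free bound \eqref{dis:free}) together with a diffractive term $\int J_0(\lambda\rho(u))\,K_\Phi(u)\,du$ carrying the weight $\sin(\pi\Phi)$, where $K_\Phi$ is absolutely integrable and $\rho(u)\ge|x-y|$. The matter then reduces to the scalar oscillatory integral
\begin{equation*}
\int_0^\infty e^{\mp i\lambda^4}\,\lambda\,J_0(\lambda\rho)\,d\lambda,
\end{equation*}
uniformly in $\rho\ge0$. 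For $\lambda\rho\lesssim1$ one bounds directly, using $J_0(\lambda\rho)\approx1$ and the convergent $\int_0^\infty e^{\mp i\lambda^4}\lambda\,d\lambda$, which already delivers the on-diagonal value $O(1)$, i.e. the $|t|^{-1/2}$ of \eqref{dis:free}. For $\lambda\rho\gtrsim1$ the large-argument expansion of $J_0$ produces the phase $-\lambda^4\pm\lambda\rho$ with a single nondegenerate critical point at $\lambda_c\sim\rho^{1/3}$ and $|\phi''(\lambda_c)|\sim\rho^{2/3}$; together with the amplitude $\lambda^{1/2}\rho^{-1/2}$ this gives a van der Corput bound uniform in $\rho$ that in fact decays like $(1+\rho)^{-2/3}$, matching the pointwise kernel \eqref{dec:point}. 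Integrating against the absolutely integrable $K_\Phi$ and restoring the scaling then yields \eqref{dis:S}.

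I expect the main obstacle to be the construction and uniform control of the resolvent kernel rather than the (essentially classical) stationary phase. Resumming $\sum_k e^{ik(\theta-\phi)}J_{\beta_k}(\lambda r)J_{\beta_k}(\lambda s)$ into the geometric-plus-diffractive form, verifying that $K_\Phi$ is absolutely integrable with $\rho(u)\ge|x-y|$, and tracking the behaviour as $\Phi\to\mathbb{Z}$ (where $\sin(\pi\Phi)\to0$ while the nearest mode has $\beta_k\to0$, threatening a zero-energy obstruction) is the delicate part. The cancellation forced at $\lambda=0$ by $\beta_k\ge\mathrm{dist}(\Phi,\mathbb{Z})>0$ is the analytic expression of the regularity of $\mathcal{L}_{\mathbf{A},2}$ at the threshold, and keeping the stationary phase estimate uniform through the turning region $\lambda r\sim\beta_k$ and through the near-diagonal endpoint where $\lambda_c$ collides with $\lambda=0$ is where the real work lies.
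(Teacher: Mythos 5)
Your proposal is correct in outline and follows essentially the same route as the paper: the geometric-plus-diffractive kernel you plan to obtain by resumming the Bessel series is exactly the known second-order resolvent kernel the paper quotes (Proposition \ref{prop:res-2}, taken from \cite{GYZZ22,FZZ23}), and both arguments then reduce matters to one-dimensional oscillatory integrals with phase $-t\lambda^{4}\pm\lambda\rho$, $\rho\geq|x-y|$, handled by stationary phase/van der Corput. The remaining differences are technical rather than structural: you pass to the fourth-order flow by functional calculus for $\mathcal{L}_{\mathbf{A},2}^{2}$ and rescale $\lambda\mapsto|t|^{-1/4}\lambda$ to prove a single uniform bound, whereas the paper uses Stone's formula together with the resolvent splitting identity (Proposition \ref{prop:res-m}) and a dyadic decomposition in $\lambda$ (Lemma \ref{lem:reduce}) whose pieces are summed at the end.
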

\begin{remark}\label{rem:1.2}
Theorem \ref{thm:dis} is a natural generalization of the dispersive estimate \eqref{dis:free} for the free operator $\Delta^2$. We do not include an additional rough potential $V$ (i.e.  $\mathcal{L}_{\mathbf{A},4}+V$ with $|V(x)|\lesssim(1+|x|)^{-\beta}$) here since once we have proved the dispersive estimate \eqref{dis:S} for the magnetic operator $\mathcal{L}_{\mathbf{A},4}$, then, viewing $\mathcal{L}_{\mathbf{A},4}+V$ as a perturbation of $\mathcal{L}_{\mathbf{A},4}$ (similar to the treatment of $\Delta^2+V$), one may closely follow the steps of Li-Soffer-Yao \cite{LSY23} (with some obvious modifications) to obtain the dispersive estimate for $\mathcal{L}_{\mathbf{A},4}+V$. The paper \cite{LSY23} is quite lengthy (but the idea is the classical perturbation method), so we only consider the purely magnetic case $\mathcal{L}_{\mathbf{A},4}$ to avoid repeating.
Apart from the case of an additional perturbation $V$, there are also many interesting situations to be considered. For example, one may consider the dispersive or smoothing estimates for the general higher order Schr\"odinger operators (with electromagnetic potentials) in two and higher dimensions.
\end{remark}

\section{preliminaries}

In this section, we obtain the resolvent kernel of the operator $\mathcal{L}_{\mathbf{A},4}$ by using the second order resolvent kernel constructed in \cite{GYZZ22} (see also \cite{FZZ23}).

\subsection{The magnetic flux}

If we write $\hat{x}=(\cos\theta,\sin\theta)$ for $\hat{x}\in\mathbb{S}^1$, then we have $\partial_\theta=\hat{x}_1\partial_{\hat{x}_2}-\hat{x}_2\partial_{\hat{x}_1}$ and $\partial_\theta^2=\Delta_{\mathbb{S}^1}$. Set $x=r\hat{x}$, then the operator $\mathcal{L}_{\mathbf{A},1}$ (i.e. \eqref{op} with $m=1$) can be rewritten as
\begin{equation*}
\mathcal{L}_{\mathbf{A},1}=-\partial_r^2-r^{-1}\partial_r+r^{-2}L_\mathbf{A},
\end{equation*}
where
\begin{equation*}
L_\mathbf{A}=(-i\nabla_{\mathbb{S}^1}+\mathbf{A}(\hat{x}))^2
=-\Delta_{\mathbb{S}^1}+|\mathbf{A}(\hat{x})|^2+i\mathrm{div}_{\mathbb{S}^1}\mathbf{A}(\hat{x})+2i\mathbf{A}(\hat{x})\cdot\nabla_{\mathbb{S}^1}.
\end{equation*}
Define $\alpha(\theta):[0,2\pi]\rightarrow\R$ by 
\begin{equation*}
\alpha(\theta)=\mathbf{A}(\cos\theta,\sin\theta)\cdot(-\sin\theta,\cos\theta),
\end{equation*}
then, in view of the transversality assumption \eqref{cond:trans}, we have
\begin{equation*}
\mathbf{A}(\cos\theta,\sin\theta)=(-\alpha(\theta)\sin\theta,\alpha(\theta)\cos\theta),\quad \theta\in[0,2\pi]
\end{equation*}
and thus
\begin{equation*}
\begin{split}
L_\mathbf{A}&=-\Delta_{\mathbb{S}^1}+|\mathbf{A}(\hat{x})|^2+i\mathrm{div}_{\mathbb{S}^1}\mathbf{A}(\hat{x})+2i\mathbf{A}(\hat{x})\cdot\nabla_{\mathbb{S}^1}\\
&=-\partial_\theta^2+|\alpha(\theta)|^2+i\alpha'(\theta)+2i\alpha(\theta)\partial_\theta\\
&=(-i\partial_\theta+\alpha(\theta))^2.
\end{split}
\end{equation*}
In what follows, the total magnetic flux $\alpha$ is defined by
\begin{equation}\label{alpha}
\alpha:=\frac{1}{2\pi}\int_0^{2\pi}\alpha(\theta)d\theta.
\end{equation}

\subsection{Resolvent kernel for $\mathcal{L}_{\mathbf{A},4}$}

To obtain the kernel of the resolvent $(\mathcal{L}_{\mathbf{A},4}-z)^{-1}$, we need the resolvent kernel of the second order operator $\mathcal{L}_{\mathbf{A},2}$ (see \cite{FZZ23,GYZZ22}). The resolvent $(\mathcal{L}_{\mathbf{A},4}-z)^{-1}$ is defined for $\lambda>0$ by
\begin{equation*}
R_4^\pm(\lambda^4):=(\mathcal{L}_{\mathbf{A},4}-(\lambda^{4}\pm i0))^{-1}=\lim_{\epsilon\searrow0}(\mathcal{L}_{\mathbf{A},4}-(\lambda^{4}\pm i\epsilon))^{-1}.
\end{equation*}
\begin{proposition}[Resolvent kernel of $\mathcal{L}_{\mathbf{A},2}$]\label{prop:res-2}
Let $\theta_x,\theta_y$ denote the angles between the positive real axis and $x,y\in\R^2$ respectively and define the partial magnetic flux $\alpha_{12}$ by
\begin{equation*}
\alpha_{12}:=\alpha(\theta_1,\theta_2)=\int_{\theta_1}^{\theta_2}\alpha(\vartheta)d\vartheta.
\end{equation*} 
The Hankel function $H_0^\pm(z)$ is defined for $z\in\mathbb{C}$ by
\begin{equation*}
H_0^\pm(z)=J_0(z)\pm iY_0(z),
\end{equation*}
where $J_0(z)$ and $Y_0(z)$ are the Bessel functions of the first kind and the second kind, respectively. For $x,y\in\R^2$ and $s>0$, we define the vector $\mathbf{n}=(|x|+|y|,\sqrt{2|x||y|(\cosh s-1)})$, then the kernel of the resolvent $(\mathcal{L}_{\mathbf{A},2}-(\lambda^2\pm i0))^{-1}$ is given by
\begin{equation*}
\begin{split}
R_2^\pm(\lambda^2)(x,y):&=(\mathcal{L}_{\mathbf{A},2}-(\lambda^2\pm i0))^{-1}(x,y)\\
&=\pm\frac{i}{4\pi}\left(H_0^\pm(\lambda|x-y|)A_\alpha(\theta_x,\theta_y)+\int_0^\infty H_0^\pm(\lambda|\mathbf{n}|)B_\alpha(s,\theta_x,\theta_y)ds\right)
\end{split}
\end{equation*}
with 
\begin{equation*}
A_\alpha(\theta_x,\theta_y)=\frac{e^{i\alpha_{12}}}{4\pi^2}\left(1_{[0,\pi]}(|\theta_x-\theta_y|)+e^{-2\pi i\alpha}1_{[\pi,2\pi]}(|\theta_x-\theta_y|)\right)
\end{equation*}
and
\begin{align*}
&B_\alpha(s,\theta_x,\theta_y)=-\frac{e^{-i\alpha(\theta_x-\theta_y)-i\alpha_{12}}}{4\pi^2}\bigg(\sin(\pi|\alpha|)e^{-s|\alpha|}\\
&+\sin(\alpha\pi)\frac{(e^{-s}+\cos(\theta_x-\theta_y))\sinh(s\alpha)+i\sin(\theta_x-\theta_y)\cosh(s\alpha)}{\cosh s+\cos(\theta_x-\theta_y)}\bigg).
\end{align*}
Here $\alpha$ is the constant defined in \eqref{alpha}.
\end{proposition}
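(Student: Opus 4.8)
The plan is to reduce $\mathcal{L}_{\mathbf{A},2}$ to the constant-flux Aharonov--Bohm operator by a gauge transformation, separate variables in polar coordinates, express the resolvent as a partial-wave series of Bessel functions, and finally resum that series into closed form; the resummation is the crux of the argument and is precisely the computation performed in \cite{GYZZ22,FZZ23}. Writing $x=r\hat{x}$ as in Section~2.1, we have $\mathcal{L}_{\mathbf{A},2}=-\pa_r^2-r^{-1}\pa_r+r^{-2}(-i\pa_\theta+\alpha(\theta))^2$. First I would strip off the oscillating part of the flux by conjugating with the unitary multiplication operator $e^{i\Phi(\theta)}$, where $\Phi(\theta)=\alpha\,\theta-\int_0^\theta\alpha(\vartheta)\,d\vartheta$. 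This $\Phi$ is $2\pi$-periodic exactly because $\alpha$ is the \emph{average} flux \eqref{alpha}, so $e^{i\Phi}$ is well defined on $\mathbb{S}^1$; a direct computation shows that conjugation sends $(-i\pa_\theta+\alpha(\theta))$ to $(-i\pa_\theta+\alpha)$. Hence the gauged operator is the generalized Aharonov--Bohm Hamiltonian with constant flux $\alpha$, and the resolvent kernels differ by the phase $e^{i(\Phi(\theta_x)-\Phi(\theta_y))}$, which (together with the fractional-order shift introduced below) is responsible for the flux-dependent exponentials $e^{i\alpha_{12}}$ and $e^{-i\alpha(\theta_x-\theta_y)-i\alpha_{12}}$ decorating $A_\alpha$ and $B_\alpha$.

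Next I would diagonalize the angular operator $(-i\pa_\theta+\alpha)^2$: its eigenfunctions $e^{ik\theta}$ ($k\in\Z$) carry eigenvalues $(k+\alpha)^2$, so the $k$-th mode is governed by the Bessel equation of order $\nu_k=|k+\alpha|$. For each $k$ the radial resolvent of $-\pa_r^2-r^{-1}\pa_r+\nu_k^2 r^{-2}-\lambda^2$ singled out by the outgoing/incoming Sommerfeld condition is a fixed multiple of $J_{\nu_k}(\lambda r_<)\,H_{\nu_k}^\pm(\lambda r_>)$, where $r_<=\min\{|x|,|y|\}$ and $r_>=\max\{|x|,|y|\}$. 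Summing the modes against the angular projectors then yields the partial-wave representation
\begin{equation*}
R_2^\pm(\lambda^2)(x,y)=c_\pm\,e^{i(\Phi(\theta_x)-\Phi(\theta_y))}\sum_{k\in\Z}e^{ik(\theta_x-\theta_y)}J_{|k+\alpha|}(\lambda r_<)\,H_{|k+\alpha|}^\pm(\lambda r_>),
\end{equation*}
whose normalization $c_\pm$ is pinned down by the flux-free case $\alpha\equiv 0$, in which Graf's addition theorem collapses the series to $H_0^\pm(\lambda|x-y|)$ and reproduces the free resolvent.

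The main obstacle is to resum this series when $\alpha$ is not an integer. Here I would replace the product $J_{|k+\alpha|}H_{|k+\alpha|}^\pm$ by a Schl\"afli/Sommerfeld-type integral representation in which the order enters only through an exponential, turn the sum over $k$ into a geometric series, and evaluate it by a contour (Sommerfeld--Watson) deformation. The integer part of the flux merely relabels the lattice and rebuilds the free Hankel kernel $H_0^\pm(\lambda|x-y|)$, producing the geometric term $A_\alpha$; the branch crossed as $|\theta_x-\theta_y|$ passes through $\pi$ accounts for the split indicator $1_{[0,\pi]}+e^{-2\pi i\alpha}1_{[\pi,2\pi]}$. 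The fractional part $\{\alpha\}$ leaves a genuine remainder, and after the change of variables $r^2\mapsto|\mathbf{n}|^2=(|x|+|y|)^2+2|x||y|(\cosh s-1)$ this remainder becomes the diffractive integral $\int_0^\infty H_0^\pm(\lambda|\mathbf{n}|)\,B_\alpha(s,\theta_x,\theta_y)\,ds$, with $B_\alpha$ as stated.

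The delicate points in this final step are the uniform convergence of the partial-wave series, the legitimacy of interchanging it with the hyperbolic integral, and the careful bookkeeping of the branch cut that generates the two indicator regions. These are the technical hearts of the matter; since they are carried out in full in \cite{GYZZ22} (see also \cite{FZZ23}), I would invoke those computations to complete the identification of $R_2^\pm(\lambda^2)(x,y)$ with the claimed closed form.
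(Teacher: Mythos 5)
Your proposal is correct and matches the paper's treatment: the paper states this proposition without proof, simply citing \cite{GYZZ22,FZZ23}, and your outline (the gauge transformation $e^{i\Phi}$ with $\Phi(\theta)=\alpha\theta-\int_0^\theta\alpha(\vartheta)d\vartheta$ reducing to constant flux, the partial-wave expansion into $J_{|k+\alpha|}H^\pm_{|k+\alpha|}$ products, and the Sommerfeld--Watson resummation producing the geometric term $A_\alpha$ plus the diffractive integral against $B_\alpha$) is a faithful sketch of the construction carried out in exactly those references, to which you also defer the technical core. Since both you and the paper ultimately rest the statement on the same cited computation, the approaches coincide.
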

Similar to the resolvent identity for the free operator $\Delta^2$ 
\begin{equation*}
\left(\Delta^2-z\right)^{-1}=\frac{1}{2\sqrt{z}}\left((-\Delta-\sqrt{z})^{-1}-(-\Delta+\sqrt{z})^{-1}\right),\quad z\in\mathbb{C},
\end{equation*}
one obtains the kernel of the resolvent $R_4^\pm(\lambda^{4})=(\mathcal{L}_{\mathbf{A},4}-(\lambda^{4}\pm i0))^{-1}$.
\begin{proposition}\label{prop:res-m}
Let $R_4^\pm(\lambda^{4})(x,y)$ be the kernel function of the resolvent $(\mathcal{L}_{\mathbf{A},4}-(\lambda^{4}\pm i0))^{-1}$, then, using the notations of Proposition \ref{prop:res-2}, we have
\begin{equation*}
\begin{split}
R_4^\pm(\lambda^{4})(x,y)
=&\pm\frac{iA_\alpha(\theta_x,\theta_y)}{8\pi\lambda^{2}}\left(H_0^\pm(\lambda|x-y|)-H_0^\pm(i\lambda|x-y|)\right)\\
&+\frac{\pm i}{8\pi\lambda^{2}}\int_0^\infty\left(H_0^\pm(\lambda|\mathbf{n}(s)|)-H_0^\pm(i\lambda|\mathbf{n}(s)|)\right)B_\alpha(s,\theta_x,\theta_y)ds.
\end{split}
\end{equation*}
\end{proposition}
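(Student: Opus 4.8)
The plan is to exploit the algebraic fact that the fourth order operator is literally the square of the second order one, $\mathcal{L}_{\mathbf{A},4}=\mathcal{L}_{\mathbf{A},2}^2$: on $C_c^\infty(\R^2\setminus\{0\})$ the two differential expressions coincide, and the self-adjoint realization of $\mathcal{L}_{\mathbf{A},4}$ is defined through the functional calculus of the Friedrichs extension $\mathcal{L}_{\mathbf{A},2}$. Since $\mathcal{L}_{\mathbf{A},2}$ is nonnegative and self-adjoint with spectrum $[0,\infty)$, the scalar partial-fraction identity $\frac{1}{\mu^2-z}=\frac{1}{2\sqrt z}\big(\frac{1}{\mu-\sqrt z}-\frac{1}{\mu+\sqrt z}\big)$ lifts, via the spectral theorem, to the operator identity
\[
(\mathcal{L}_{\mathbf{A},4}-z)^{-1}=\frac{1}{2\sqrt z}\Big((\mathcal{L}_{\mathbf{A},2}-\sqrt z)^{-1}-(\mathcal{L}_{\mathbf{A},2}+\sqrt z)^{-1}\Big),\qquad z\in\C\setminus[0,\infty),
\]
which is exactly the displayed analog of the free resolvent identity. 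This reduces everything to inserting the known kernel of the second order resolvent from Proposition~\ref{prop:res-2}.

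Next I would take the limit $z=\lambda^{4}\pm i0$, for which $\sqrt z=\lambda^{2}$ with the branch fixed by the side of approach. The first summand then becomes $R_2^\pm(\lambda^{2})$, whose kernel is given verbatim by Proposition~\ref{prop:res-2}; carrying the factor $\frac{1}{2\lambda^{2}}$ through its prefactor $\pm\frac{i}{4\pi}$ produces $\pm\frac{i}{8\pi\lambda^{2}}$ and hence precisely the $H_0^\pm(\lambda|x-y|)$ term together with the $\int_0^\infty H_0^\pm(\lambda|\mathbf{n}|)B_\alpha\,ds$ contribution. The second summand is the resolvent $(\mathcal{L}_{\mathbf{A},2}+\lambda^{2})^{-1}$ evaluated at the point $-\lambda^{2}$, which lies off the continuous spectrum $[0,\infty)$; there is therefore no boundary ambiguity, and its kernel is obtained by analytic continuation of the formula in Proposition~\ref{prop:res-2}. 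Concretely, since $-\lambda^{2}=(i\lambda)^{2}$ and $i\lambda$ sits in the half-plane on which the relevant Hankel branch decays, I would substitute $\lambda\mapsto i\lambda$ in every argument, turning $H_0^\pm(\lambda|x-y|)$ into $H_0^\pm(i\lambda|x-y|)$ and $H_0^\pm(\lambda|\mathbf{n}|)$ into $H_0^\pm(i\lambda|\mathbf{n}|)$, while leaving $A_\alpha$ and $B_\alpha$ untouched. Subtracting this from the first summand yields exactly the claimed expression for $R_4^\pm(\lambda^{4})(x,y)$.

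The step I expect to be the main obstacle is the rigorous justification of the analytic continuation and the bookkeeping of Hankel-function branches. One must confirm that continuing the $\pm$ boundary value of $R_2$ from $\lambda^{2}$ around to $-\lambda^{2}$ indeed lands on $H_0^\pm(i\lambda\,|\cdot|)$ rather than on its exponentially growing partner; this is handled via the connection formula $H_0^+(e^{i\pi}w)=-H_0^-(w)$ (and its conjugate) together with the identity $H_0^+(iy)=\frac{2}{\pi i}K_0(y)$, which exhibits the decaying modified-Bessel branch $K_0$. One should also verify that the two a priori different continuations—of the $+$ and of the $-$ boundary value—agree, so that $(\mathcal{L}_{\mathbf{A},2}+\lambda^{2})^{-1}$ is unambiguous, and that the $s$-integral still converges, which is in fact easier here since the new Hankel factor decays exponentially in $|\mathbf{n}(s)|$. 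Once the branch choice is pinned down, matching the signs and prefactors is routine and the stated formula follows.
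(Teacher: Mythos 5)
Your proposal is correct and follows essentially the same route as the paper: the paper's entire proof consists of the remark that, in analogy with the free identity $(\Delta^2-z)^{-1}=\frac{1}{2\sqrt z}\left((-\Delta-\sqrt z)^{-1}-(-\Delta+\sqrt z)^{-1}\right)$, the operator identity $(\mathcal{L}_{\mathbf{A},4}-z)^{-1}=\frac{1}{2\sqrt z}\left((\mathcal{L}_{\mathbf{A},2}-\sqrt z)^{-1}-(\mathcal{L}_{\mathbf{A},2}+\sqrt z)^{-1}\right)$ combined with Proposition \ref{prop:res-2} yields the stated kernel. Your extra steps---justifying the identity via the spectral theorem for $\mathcal{L}_{\mathbf{A},2}^2$ and pinning down that the continuation to $-\lambda^2$ lands on the exponentially decaying $K_0$-branch of the Hankel function---are exactly the details the paper leaves implicit, so they strengthen rather than deviate from its argument.
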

To analysis the resolvent kernel $R_4^\pm(\lambda^{4})(x,y)$, we need the asymptotic behaviors of the Hankel function $H_0^\pm(z)$ (c.f. \cite{AS65}).
\begin{lemma}\label{lem:free}
The Hankel function $H_0^\pm$ is defined by
\begin{equation*}
H_0^\pm(z)=J_0(z)\pm iY_0(z),
\end{equation*}
where 
\begin{equation*}
\begin{split}
J_0(z)&=\sum_{k=0}^\infty(-1)^k\frac{z^{2k}}{2^{2k}(k!)^2},\quad z\rightarrow0,\\
Y_0(z)&=\frac{2}{\pi}(\ln(z/2)+\gamma)J_0(z)+\frac{2}{\pi}\sum_{k=1}^\infty(-1)^{k-1}\left(\sum_{j=1}^k\frac{1}{j}\right)\frac{z^{2k}}{2^{2k}(k!)^2},z\rightarrow0.
\end{split}
\end{equation*}
Here $\gamma$ is the Euler constant. For $|z|>1$, one has
\begin{equation}\label{rep:hankel}
H_0^\pm(z)=e^{\pm iz}\omega_\pm(z),\quad |\omega_\pm^{(k)}(z)|\lesssim(1+|z|)^{-\frac{1}{2}-k},\quad\forall k\geq0.
\end{equation}
\end{lemma}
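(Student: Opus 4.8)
The expansions of $J_0$ and $Y_0$ near $z=0$ stated in the lemma are the classical Frobenius solutions of Bessel's equation $z^2u''+zu'+z^2u=0$ at the regular singular point $z=0$, the logarithmic factor in $Y_0$ reflecting the coincidence of the two indicial roots; I would simply record these series from the standard reference \cite{AS65} and take $H_0^\pm=J_0\pm iY_0$ as the definition of the Hankel functions of the first and second kind. The genuine content of the lemma is the large-$z$ representation \eqref{rep:hankel}, and the plan is to read it off a Laplace-type integral formula for $H_0^\pm$.

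Concretely, for $z$ in the closed upper half-plane (which covers exactly the two rays $\arg z=0$ and $\arg z=\pi/2$, i.e. the arguments $\lambda|x-y|$ and $i\lambda|x-y|$ that occur in Proposition \ref{prop:res-m}) one has the representation
\begin{equation*}
H_0^+(z)=\sqrt{\frac{2}{\pi z}}\,\frac{e^{i(z-\pi/4)}}{\Gamma(1/2)}\int_0^\infty e^{-s}\,s^{-1/2}\Bigl(1+\frac{is}{2z}\Bigr)^{-1/2}\,ds .
\end{equation*}
I would therefore \emph{define} $\omega_\pm(z):=e^{\mp iz}H_0^\pm(z)$, so that for the plus sign
\begin{equation*}
\omega_+(z)=c_+\,z^{-1/2}\int_0^\infty e^{-s}\,s^{-1/2}\Bigl(1+\frac{is}{2z}\Bigr)^{-1/2}\,ds
\end{equation*}
with a fixed constant $c_+$; this cleanly factors out the oscillatory (or, on the imaginary axis, decaying) exponential $e^{\pm iz}$ and reduces the lemma to controlling the derivatives of the displayed integral times $z^{-1/2}$.

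For the derivative bounds, the key elementary observation is that for $\Im z\ge 0$ and $s>0$ one has $\Re\bigl(is/(2z)\bigr)=\tfrac{s}{2|z|}\sin(\arg z)\ge 0$, so that $\Re\bigl(1+is/(2z)\bigr)\ge 1$ and the principal branch of $\bigl(1+is/(2z)\bigr)^{-1/2}$ is analytic and bounded by $1$ throughout the region of integration. Each $z$-derivative of this factor produces an extra factor of order $s/z^2$ times a bounded quantity, while each derivative of the prefactor $z^{-1/2}$ lowers the power of $z$ by one. Differentiating under the integral sign and applying Leibniz' rule, a derivative of order $k$ becomes a finite sum of terms of the form $z^{-1/2-k}\int_0^\infty e^{-s}s^{-1/2+m}(\cdots)\,ds$ with $0\le m\le k$ and the remaining factors bounded; since $\int_0^\infty e^{-s}s^{-1/2+m}\,ds<\infty$ for every $m$, this yields $|\omega_+^{(k)}(z)|\lesssim|z|^{-1/2-k}$ for $|z|>1$, and matching with the smoothness of $H_0^+$ away from the branch point gives the stated bound $(1+|z|)^{-1/2-k}$. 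The minus case follows by the reflection $H_0^-(z)=\overline{H_0^+(\bar z)}$, which gives $\omega_-=\overline{\omega_+}$ on the positive real axis; on the positive imaginary axis one instead uses the identification of $H_0^-(i\rho)$ with (a growing exponential times) $K_0(\rho)$ and the entirely analogous Laplace representation of $K_0$, whose integrand is again a bounded symbol factor against $e^{-s}$.

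The main obstacle I anticipate is bookkeeping rather than conceptual: one must verify that the argument $1+is/(2z)$ stays in the principal sector so that the square root and all its derivatives are legitimately bounded (handled by $\Re(i/z)\ge 0$ in the upper half-plane), and one must confirm that the polynomial-in-$s$ growth generated by repeated differentiation is absorbed by the weight $e^{-s}$, which it is. The only genuinely delicate point is the branch/domain matching for the minus sign on the ray $\arg z=\pi/2$, which is why I would route that case through $K_0$ rather than through the $H_0^+$ representation directly.
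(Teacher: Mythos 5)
First, a point of comparison: the paper gives no proof of this lemma at all --- it is quoted as a classical fact about Bessel/Hankel functions with a pointer to \cite{AS65}, so your derivation is already doing more than the text does. The core of your plan is the standard route and is carried out correctly where it matters most: Hankel's Laplace-type representation is valid on a sector containing both rays $\arg z=0$ and $\arg z=\pi/2$; your observation that $\Re\bigl(is/(2z)\bigr)\ge0$ for $\Im z\ge0$ keeps $1+is/(2z)$ in the half-plane $\Re w\ge1$, where the principal branch of $w^{-1/2}$ is analytic with modulus at most $1$; and the Leibniz/Fa\`a di Bruno bookkeeping (each derivative of the symbol factor costs a bounded multiple of $s/z^{2}$ against the integrable weight $e^{-s}s^{-1/2}$) does give $|\omega_+^{(k)}(z)|\lesssim|z|^{-1/2-k}$, which is equivalent to the stated bound for $|z|>1$. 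The minus sign on the positive real axis via $H_0^-(x)=\overline{H_0^+(x)}$, $x>0$, is also correct.

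The gap is exactly in the case you flagged as delicate: $H_0^-$ on the ray $\arg z=\pi/2$. The identification of $H_0^-(i\rho)$ with ``a growing exponential times $K_0(\rho)$'' is not an identity. The connection formulas give
\begin{equation*}
H_0^+(i\rho)=-\frac{2i}{\pi}K_0(\rho),\qquad
H_0^-(i\rho)=2J_0(i\rho)-H_0^+(i\rho)=2I_0(\rho)+\frac{2i}{\pi}K_0(\rho),
\end{equation*}
so the exponentially growing part of $H_0^-(i\rho)$ is carried by the modified Bessel function $I_0(\rho)\sim e^{\rho}(2\pi\rho)^{-1/2}$; a multiple of $e^{2\rho}K_0(\rho)$ agrees with $2I_0(\rho)$ only to leading asymptotic order, not exactly, so no Laplace representation of $K_0$ alone can produce the derivative bounds for $\omega_-(i\rho)=e^{-\rho}H_0^-(i\rho)$. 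There are two repairs. (i) Control $e^{-\rho}I_0(\rho)$ directly from $e^{-\rho}I_0(\rho)=\pi^{-1}\int_0^{\pi}e^{-\rho(1-\cos\theta)}\,d\theta$: differentiating in $\rho$ brings down powers of $1-\cos\theta$, and the substitution $v=1-\cos\theta$ reduces the resulting integrals to $\int_0^2 v^{k-1/2}e^{-\rho v}(2-v)^{-1/2}\,dv\lesssim\rho^{-k-1/2}$, i.e. the same Laplace-method bookkeeping as your $H_0^+$ computation. (ii) Alternatively, observe that this case is never used in the paper: in \eqref{F} the large-argument part of $F^\pm(\rho)$ is written as $c_7^\pm e^{\pm i\rho}\omega_\pm(\rho)-c_8^\pm e^{-\rho}\omega_+(i\rho)$, with only $\omega_+(i\rho)$ appearing for both signs, because the second piece of the fourth-order resolvent splitting, $(-\Delta+\lambda^2)^{-1}$, lies off the spectrum of $\mathcal{L}_{\mathbf{A},2}$, is the same for both boundary values $\pm$, and has kernel built from $K_0=\frac{\pi i}{2}H_0^+(i\,\cdot)$. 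If you restrict the claim \eqref{rep:hankel} to $\omega_\pm$ on $(1,\infty)$ and to $\omega_+$ on the ray $i(1,\infty)$, your proof is complete as written; if you want the lemma for both signs on both rays, you need step (i).
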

We also need the Van der Corput lemma from \cite[P. 334]{Ste93}.
\begin{lemma}\label{lem:vdc}
Let $\phi,\psi\in C_0^\infty(a,b)$ with $|\phi^{(k)}(x)|\geq1$ for all $x\in(a,b)$. If $k\geq2$ or $k=1$ and $\phi'(x)$ is monotonic, then we have
\begin{equation*}
\left|\int_a^be^{i\lambda\phi(x)}\psi(x)dx\right|\lesssim_k\lambda^{-\frac{1}{k}}\left(|\psi(b)|+\int_a^b|\psi'(x)|dx\right).
\end{equation*}
\end{lemma}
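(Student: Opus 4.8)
The plan is to prove the estimate by induction on the order $k$, which is the classical route to the Van der Corput lemma: the base case $k=1$ is handled by a single integration by parts, and the inductive step reduces the order-$k$ bound to the order-$(k-1)$ bound by excising a small neighborhood of the unique point where $\phi^{(k-1)}$ can vanish and then optimizing the size of that neighborhood. Throughout I would keep the amplitude $\psi$ general (as in Stein), so that the boundary contributions are genuinely present and produce the $|\psi(b)|$ term in the bound.

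For the base case $k=1$, I would assume $|\phi'|\ge 1$ with $\phi'$ monotonic and write the oscillatory factor as a derivative, $e^{i\lambda\phi}=\tfrac{1}{i\lambda\phi'}\tfrac{d}{dx}e^{i\lambda\phi}$, and integrate by parts to get
\[
\int_a^b e^{i\lambda\phi}\psi\,dx=\left.\frac{\psi\, e^{i\lambda\phi}}{i\lambda\phi'}\right|_a^b-\frac{1}{i\lambda}\int_a^b e^{i\lambda\phi}\left(\frac{\psi'}{\phi'}-\frac{\psi\phi''}{(\phi')^2}\right)dx.
\]
The boundary term is bounded by $\lambda^{-1}(|\psi(a)|+|\psi(b)|)$ using $|\phi'|\ge1$, and $|\psi(a)|\le|\psi(b)|+\int_a^b|\psi'|$; the $\psi'/\phi'$ term is bounded by $\lambda^{-1}\int_a^b|\psi'|$. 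For the remaining term I would exploit that $\phi'$ monotone forces $\phi''$ to have constant sign, so
\[
\int_a^b\frac{|\phi''|}{(\phi')^2}\,dx=\left|\int_a^b\frac{d}{dx}\frac{1}{\phi'}\,dx\right|=\left|\frac{1}{\phi'(a)}-\frac{1}{\phi'(b)}\right|\le 1,
\]
and combine this with $\sup_{[a,b]}|\psi|\le|\psi(b)|+\int_a^b|\psi'|$ to close the $k=1$ case.

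For the inductive step I would assume the bound for $k-1$, noting that when $k=2$ the hypothesis $|\phi''|\ge1$ makes $\phi'$ monotone, so the order-one case is exactly what is available. Since $|\phi^{(k)}|\ge1$, the function $\phi^{(k-1)}$ is strictly monotone and vanishes at most once, say at $x_0$; moving away from $x_0$ at rate at least one gives $|\phi^{(k-1)}|\ge\delta$ outside $(x_0-\delta,x_0+\delta)$. On the excised middle interval I bound trivially by $2\delta\,(|\psi(b)|+\int_a^b|\psi'|)$. On each of the two remaining pieces I apply the inductive hypothesis to the phase $\phi/\delta$, whose $(k-1)$-th derivative has modulus $\ge1$ there, with oscillation parameter $\lambda\delta$ (so that $(\lambda\delta)(\phi/\delta)=\lambda\phi$), obtaining a bound $\lesssim(\lambda\delta)^{-1/(k-1)}(|\psi(b)|+\int_a^b|\psi'|)$. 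Choosing $\delta=\lambda^{-1/k}$ balances the two contributions and yields the desired $\lambda^{-1/k}$ decay.

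I expect the delicate points to be bookkeeping rather than conceptual. The main care is in tracking that the implicit constant $c_k$ depends only on $k$, and not on $\phi$, $\lambda$, or the interval, as it propagates through the induction, together with handling the rescaling correctly so that the inductive hypothesis really applies to $\phi/\delta$ with parameter $\lambda\delta$. One should also treat the degenerate configurations: if $\phi^{(k-1)}$ has no zero in $(a,b)$ one simply places $x_0$ at an endpoint (or omits the excision), and if the interval $(x_0-\delta,x_0+\delta)$ protrudes beyond $[a,b]$ it is merely truncated. These are standard adjustments and do not affect the final exponent $-1/k$.
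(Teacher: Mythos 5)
Your proof is correct. Note that the paper itself does not prove this lemma at all: it is quoted verbatim (as Lemma \ref{lem:vdc}) from Stein's book \cite{Ste93}, so the only comparison available is with the classical argument there, which yours essentially reproduces. The one genuine difference is organizational: Stein first proves the amplitude-free bound $\left|\int_a^b e^{i\lambda\phi}\,dx\right|\lesssim_k\lambda^{-1/k}$ by the same induction (integration by parts for $k=1$; excision of a $\delta$-neighborhood of the zero of $\phi^{(k-1)}$ plus rescaling, with $\delta=\lambda^{-1/k}$, for the step), and only then inserts the amplitude by a single integration by parts against $F(x)=\int_a^x e^{i\lambda\phi(t)}\,dt$, which produces the $|\psi(b)|+\int_a^b|\psi'|$ factor in one stroke. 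You instead carry $\psi$ through the induction, which forces repeated use of $\sup|\psi|\le|\psi(b)|+\int_a^b|\psi'|$ (in the boundary terms of the $k=1$ case, on the excised middle interval, and on the left-hand piece whose right endpoint is $x_0-\delta$ rather than $b$); this is slightly heavier bookkeeping but closes correctly, with constants depending only on $k$. Two small points of care you handled adequately: the bound $\left|\frac{1}{\phi'(a)}-\frac{1}{\phi'(b)}\right|\le1$ is legitimate because $|\phi'|\ge1$ and continuity force $\phi'$ to have a fixed sign, so both reciprocals lie in a common interval $(0,1]$ or $[-1,0)$; and when $\phi^{(k-1)}$ has no zero, "omitting the excision" alone does not suffice (its modulus may still be small near an endpoint) --- the correct fix, which you also state, is to place $x_0$ at the endpoint where $|\phi^{(k-1)}|$ attains its minimum and excise there. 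Finally, the hypothesis $\psi\in C_0^\infty(a,b)$ as printed in the paper would make $\psi(b)=0$ and is a misquotation of Stein; your decision to keep $\psi$ general is the right reading, and is what the paper actually needs in its applications.
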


\section{proof of Theorem \ref{thm:dis}}

In this section, we prove the dispersive estimate \eqref{dis:S} for the magnetic propagator $e^{-it\mathcal{L}_{\mathbf{A},4}}$ by using the resolvent kernel in Proposition \ref{prop:res-m} and Lemma \ref{lem:free}.

By the well-known Stone formula, we have
\begin{equation}\label{stone}
e^{-it\mathcal{L}_{\mathbf{A},4}}f=\frac{2}{\pi i}\int_0^\infty\lambda^{3}e^{-it\lambda^{4}}\left(R_4^+(\lambda^{4})-R_4^-(\lambda^{4})\right)fd\lambda.
\end{equation}
By Proposition \ref{prop:res-m}, we get the kernel of $e^{-it\mathcal{L}_{\mathbf{A},4}}$
\begin{equation*}
\begin{split}
&e^{-it\mathcal{L}_{\mathbf{A},4}}(x,y)\\
&=\frac{A_\alpha(\theta_x,\theta_y)}{4\pi^2}\sum_\pm\int_0^\infty\lambda e^{-it\lambda^{4}}(H_0^\pm(\lambda|x-y|)-H_0^\pm(i\lambda|x-y|))d\lambda\\
&+\frac{1}{4\pi^2}\sum_\pm\int_0^\infty\lambda e^{-it\lambda^{4}}\int_0^\infty\left(H_0^\pm(\lambda|\mathbf{n}(s)|)-H_0^\pm(i\lambda|\mathbf{n}(s)|)\right)B_\alpha(s,\theta_x,\theta_y)dsd\lambda.
\end{split}
\end{equation*}
Let $F^\pm(\rho):=H_0^\pm(\rho)-H_0^\pm(i\rho)$, then, in view of Lemma \ref{lem:free}, we have
\begin{equation}\label{F}
\begin{split}
F^\pm(\rho)=&\chi(\rho)(c_1^\pm+c_2^\pm\rho^2+c_3^\pm\rho^2\ln\rho+c_4^\pm\rho^4+c_5^\pm\rho^6+c_6^\pm\rho^6\ln\rho+\cdots)\\
&+i(1-\chi(\rho))(c_7^\pm e^{\pm i\rho}\omega_\pm(\rho)-c_8^\pm e^{-\rho}\omega_+(i\rho)),
\end{split}
\end{equation}
where $c_j^\pm,j=1,2,\ldots,8$ are suitable constants and $\chi$ is a smooth and compactly supported function (i.e. $\chi\in C^\infty(\R)$ with $\eta(s)=1$ for $|s|\leq1/2$ and $\chi(s)=0$ for $|s|\geq1$).
Let 
\begin{equation*}
G^\pm(\lambda|x-y|):=\int_0^\infty\left(H_0^\pm(\lambda|\mathbf{n}(s)|)-H_0^\pm(i\lambda|\mathbf{n}(s)|)\right)B_\alpha(s,\theta_x,\theta_y)ds,
\end{equation*}
then, from Lemma \ref{lem:free} again, we obtain
\begin{equation}\label{G}
\begin{split}
G^\pm(\lambda|x-y|)&=\int_0^\infty\bigg[\chi(\lambda|\mathbf{n}(s)|)(c_1^\pm+c_2^\pm(\lambda|\mathbf{n}(s)|)^2\ln(\lambda|\mathbf{n}(s)|)\\
&+c_3^\pm(\lambda|\mathbf{n}(s)|)^2+c_4^\pm(\lambda|\mathbf{n}(s)|)^4+c_5^\pm(\lambda|\mathbf{n}(s)|)^6\ln(\lambda|\mathbf{n}(s)|)+\cdots)\\
&+c_6^\pm(\lambda|\mathbf{n}(s)|)^6+i(1-\chi(\lambda|\mathbf{n}(s)|))\Big(c_7^\pm e^{\pm i(\lambda|\mathbf{n}(s)|)}\omega_\pm(\lambda|\mathbf{n}(s)|)\\
&-c_8^\pm e^{-\lambda|\mathbf{n}(s)|}\omega_+(i\lambda|\mathbf{n}(s)|)\Big)\bigg]B_\alpha(s,\theta_x,\theta_y)ds,
\end{split}
\end{equation}
where $c_j^\pm,j=1,2,\ldots,8,\chi$ are the same as in \eqref{F} and $\mathbf{n}(s)$ is defined in Proposition \ref{prop:res-2}.

For convenience, we denote
\begin{equation}\label{K-1}
K_1^\pm(x,y)=\int_0^\infty\lambda e^{-it\lambda^{4}}F^\pm(\lambda|x-y|)d\lambda
\end{equation}
and
\begin{equation}\label{K-2}
K_2^\pm(x,y)=\int_0^\infty\lambda e^{-it\lambda^{4}}G^\pm(\lambda|x-y|)d\lambda.
\end{equation}
Now the proof of the dispersive estimate
\begin{equation*}
\|e^{-it\mathcal{L}_{\mathbf{A},4}}\|_{L^1\rightarrow L^\infty}\lesssim|t|^{-\frac{1}{2}}
\end{equation*}
is obviously reduced to proving
\begin{equation}\label{bd-K}
\sup_{x,y\in\R^2}|K_j^\pm(x,y)|\lesssim|t|^{-\frac{1}{2}},\quad j=1,2.
\end{equation}
To apply Lemma \ref{lem:vdc}, we take $\varphi_j(s):=\chi(2^{-j}s)-\chi(2^{-j+1}s)$ so that we have $\varphi_j(s)=\varphi_0(2^{-j}s)$ with supp$\varphi_0\subset[\frac{1}{4},1]$ and $\sum_{j\in\mathbb{Z}}\varphi_0(2^{-j}s)=1$ for $s\neq0$.
Let 
\begin{equation}\label{K-1j}
K_{1,j}^\pm(x,y)=\int_0^\infty\lambda e^{-it\lambda^{4}}\varphi_0(2^{-j}\lambda)F^\pm(\lambda|x-y|)d\lambda,
\end{equation}
then we have
\begin{equation*}
  K_1^\pm(x,y)=\sum_{j\in\mathbb{Z}}K_{1,j}^\pm(x,y).
\end{equation*}
Similarly, let
\begin{equation}\label{K-2j}
K_{2,j}^\pm(x,y)=\int_0^\infty\lambda\varphi_0(2^{-j}\lambda)e^{-it\lambda^{4}}G^\pm(\lambda|x-y|)d\lambda
\end{equation}
so that we have
\begin{equation*}
K_2^\pm(x,y)=\sum_{j\in\mathbb{Z}}K_{2,j}^\pm(x,y).
\end{equation*}
It is trivial to verify that $|A_\alpha(\theta_x,\theta_y)|\leq1$ uniformly (see Proposition \ref{prop:res-2}), hence the proof of the bound \eqref{bd-K} is done if we could show
\begin{lemma}\label{lem:reduce}
Let $K_{1,j}^\pm$ and $K_{2,j}^\pm$ be given by \eqref{K-1j} and \eqref{K-2j} respectively, then there exists some $j_0\in\mathbb{Z}$ depending on $t,x,y$ such that for all $j\in\mathbb{Z},\ell=1,2$ and $t\neq0,x\neq y$, we have
\begin{equation}\label{reduce}
|K_{\ell,j}^\pm(x,y)|\lesssim2^{2j}\times
\begin{cases}
(1+2^{4j}|t|)^{-\frac{1}{2}},&|j-j_0|\leq2,\\
(1+2^{4j}|t|)^{-1},&|j-j_0|>2.
\end{cases}
\end{equation}
\end{lemma}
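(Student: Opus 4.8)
The plan is to prove \eqref{reduce} by rescaling each dyadic block to a fixed interval and then separating a stationary from a non-stationary frequency regime. First I would substitute $\lambda=2^j\mu$ in \eqref{K-1j} and \eqref{K-2j}; since $\operatorname{supp}\varphi_0\subset[1/4,1]$, this rewrites $K_{\ell,j}^\pm$ as $2^{2j}$ times an oscillatory integral over $\mu\in[1/4,1]$ carrying the phase $e^{-i\tau\mu^4}$ with $\tau:=2^{4j}t$. This already produces the prefactor $2^{2j}$ in \eqref{reduce}, and reduces matters to bounding the rescaled integral by $(1+|\tau|)^{-1/2}$ near a critical scale and by $(1+|\tau|)^{-1}$ away from it. Using \eqref{F}--\eqref{G} and Lemma \ref{lem:free} I would split each integrand into three pieces: the $\chi$-supported small-argument terms and the exponentially decaying term $e^{-\rho}\omega_+(i\rho)$, both of which have non-oscillatory, bounded (and for the latter, rapidly decaying) amplitudes; and the genuinely oscillatory term $e^{\pm i\rho}\omega_\pm(\rho)$, which for $\rho=\lambda|x-y|$ (resp. $\rho=\lambda|\mathbf{n}(s)|$) contributes the additional phase $\pm2^j\mu|x-y|$ (resp. $\pm2^j\mu|\mathbf{n}(s)|$). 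The derivative bounds $|\omega_\pm^{(k)}(z)|\lesssim(1+|z|)^{-1/2-k}$ ensure that the amplitudes have bounded total variation, as required by Lemma \ref{lem:vdc}.

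For $K_{1,j}^\pm$ I would set the critical scale $j_0$ by the balance $2^{3j_0}\sim|x-y|/|t|$, the frequency at which the total phase $\Phi_\pm(\mu)=-\tau\mu^4\pm2^j|x-y|\mu$ is stationary inside $[1/4,1]$. If $|j-j_0|\le2$, the phase can be stationary, but $|\Phi_\pm''(\mu)|=12|\tau|\mu^2\gtrsim|\tau|$ on the support, so Lemma \ref{lem:vdc} with $k=2$ gives $(1+|\tau|)^{-1/2}$; if $|j-j_0|>2$, the critical point leaves $[1/4,1]$ and $|\Phi_\pm'(\mu)|=\bigl|{-4\tau\mu^3\pm2^j|x-y|}\bigr|\gtrsim\max(|\tau|,2^j|x-y|)\gtrsim|\tau|$, so Lemma \ref{lem:vdc} with $k=1$ gives $(1+|\tau|)^{-1}$, while the range $|\tau|\lesssim1$ is covered by the trivial bound on an interval of length $\lesssim1$. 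The two non-oscillatory pieces have no stationary point and always give $(1+|\tau|)^{-1}$.

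The core difficulty is $K_{2,j}^\pm$. After interchanging the $s$- and $\lambda$-integrals, the inner integral is of $K_1$ type but with $|\mathbf{n}(s)|\ge|x|+|y|\ge|x-y|$ replacing $|x-y|$, so its stationary scale $j_0(s)$, given by $2^{3j_0(s)}\sim|\mathbf{n}(s)|/|t|$, obeys $j_0(s)\ge j_0$ and increases in $s$. I would keep the \emph{same} $j_0$ as in $K_1$. When $j$ lies below the lowest diffracted scale, no $s$ produces an interior stationary point, the inner integral is $\lesssim(1+|\tau|)^{-1}$ uniformly in $s$, and, since $B_\alpha$ decays exponentially and $\int_0^\infty|B_\alpha(s)|\,ds<\infty$, integrating in $s$ preserves $(1+|\tau|)^{-1}$. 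When $j$ is well above that scale, I would instead treat the full two-dimensional integral with phase $\Psi(\mu,s)=-\tau\mu^4\pm2^j\mu|\mathbf{n}(s)|$: because $\partial_s|\mathbf{n}(s)|$ vanishes only at $s=0$, the only critical point in $[1/4,1]\times[0,\infty)$ lies on the boundary $s=0$, and once the $\mu$-critical point exits $[1/4,1]$ one checks that along the curve $\partial_\mu\Psi=0$ one has $|\partial_s\Psi|\sim|\tau|$ while off it $|\partial_\mu\Psi|\gtrsim|\tau|$; hence $|\nabla\Psi|\gtrsim|\tau|$ and a single non-stationary-phase integration by parts yields $(1+|\tau|)^{-1}$, the $s$-derivatives of $B_\alpha$ and $|\mathbf{n}(s)|$ being absorbed by the exponential decay.

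The delicate case, which I expect to be the main obstacle, is the boundary stationary point at $s=0$, relevant when $j\approx j_0(0)=\tfrac13\log_2\big((|x|+|y|)/4|t|\big)$. This can exceed $j_0$ by more than $2$, forcing $|j-j_0|>2$ yet still demanding the sharper $(1+|\tau|)^{-1}$. The resolution is geometric: $j_0(0)$ is far above $j_0$ exactly when $|x-y|\ll|x|+|y|$, which forces $x$ and $y$ to be nearly parallel and of comparable length, so that $\tfrac{|x||y|}{|x|+|y|}\sim|x|+|y|$. Since the diagonal Hessian of $\Psi$ at $(\mu_c,0)$ is $\partial_\mu^2\Psi\sim\tau$ and $\partial_s^2\Psi\sim2^j\mu_c\tfrac{|x||y|}{|x|+|y|}$ (the mixed term vanishing because $\partial_s|\mathbf{n}(s)|$ does at $s=0$), this comparability gives $\partial_s^2\Psi\gtrsim|\tau|$, and the boundary stationary-phase contribution $\sim|\tau|^{-1/2}|\partial_s^2\Psi|^{-1/2}|B_\alpha(0)|$ improves to $(1+|\tau|)^{-1}$; when instead $j_0(0)$ is within the window of $j_0$, the weaker bound $(1+|\tau|)^{-1/2}$ is all that \eqref{reduce} asks for. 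Thus the transversal $s$-decay, invisible to a purely one-dimensional analysis, is precisely what allows a single $j_0$ to govern both the direct and the diffracted contributions, and carrying out this $(\mu,s)$ stationary/non-stationary bookkeeping uniformly---tracking the location of the critical $s$ and using the decay of $B_\alpha$---is the technically hardest point.
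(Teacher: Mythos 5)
Your treatment of $K_{1,j}^\pm$ is essentially the paper's own proof: rescale $\lambda=2^j\mu$, split the amplitude via \eqref{F}, set $2^{3j_0}\sim|x-y|/|t|$, and apply Lemma \ref{lem:vdc} with $k=2$ (using $12|\tau|\mu^2\gtrsim|\tau|$) in the window $|j-j_0|\le2$ and with $k=1$ outside it; one small point in your favor is that outside the window you correctly record the $(1+2^{4j}|t|)^{-1}$ bound, which is what makes the sum over $j$ converge, whereas the paper's text at that step only states $(1+2^{4j}|t|)^{-1/2}$. For $K_{2,j}^\pm$, however, you take a genuinely different route. The paper collapses the $s$-integral by a mean-value-type identity, asserting that $G^\pm$ equals the Hankel difference evaluated at a single $s_0$ times $\int_0^\infty B_\alpha\,ds$, and then re-runs the $\ell=1$ argument with the factored phase $e^{\pm i\lambda|x-y|}$, the same $j_0$, and the claim $|\tilde G^\pm|+|\partial_\sigma\tilde G^\pm|\lesssim1$; this is short, but the mean-value step is questionable for complex-valued integrands (and $s_0$ would in any case depend on $\lambda$), and the uniform derivative bound is delicate precisely because of the residual oscillation $e^{\pm i\lambda(|\mathbf{n}(s)|-|x-y|)}$. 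You instead keep the $s$-integral and the true phase $-\tau\mu^4\pm2^j\mu|\mathbf{n}(s)|$, thereby confronting the difficulty the paper's shortcut hides: the diffracted term oscillates at scale $|\mathbf{n}(s)|\ge|x|+|y|$, possibly much larger than $|x-y|$, so its stationary scale can lie outside $|j-j_0|\le2$ where \eqref{reduce} still demands the $-1$ decay. Your resolution is sound: since $\partial_s|\mathbf{n}(s)|=|x||y|\sinh s/|\mathbf{n}(s)|$ vanishes only at $s=0$, critical points sit on the boundary $s=0$; a boundary critical point with $j>j_0+2$ forces $|x-y|\ll|x|+|y|$, and then $4|x||y|=(|x|+|y|)^2-(|x|-|y|)^2\ge(|x|+|y|)^2-|x-y|^2$ yields $\partial_s^2\Psi\sim2^j(|x|+|y|)\sim|\tau|$, so boundary stationary phase in $(\mu,s)$ gives $|\tau|^{-1}$. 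What each approach buys: the paper's is far shorter but leans on an unjustified reduction; yours is longer and still owes the reader the two-dimensional integration-by-parts bookkeeping (boundary terms, $s$-derivatives of $B_\alpha$ and $|\mathbf{n}(s)|$), but it engages the actual geometry and would survive scrutiny. Incidentally, your hardest case admits a one-dimensional shortcut: at any $\mu$-stationary point one has $2^j|\mathbf{n}(s)|\sim|\tau|$, so \eqref{rep:hankel} gives $|\omega_\pm(2^j\mu|\mathbf{n}(s)|)|\lesssim(2^j|\mathbf{n}(s)|)^{-1/2}\lesssim|\tau|^{-1/2}$ (and the same for the total variation of the amplitude in $\mu$); hence Van der Corput with $k=2$ applied for each fixed $s$ already produces $|\tau|^{-1/2}\cdot|\tau|^{-1/2}=|\tau|^{-1}$, and integrating against $\int_0^\infty|B_\alpha(s,\theta_x,\theta_y)|\,ds\lesssim1$ finishes, with no two-dimensional stationary-phase analysis needed.
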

\begin{proof}
We assume $t>0$. We first prove \eqref{reduce} for $\ell=1$. 
Let $\lambda=2^j\sigma$, then we have
\begin{align*}
K_{1,j}^\pm(t,x,y)&=\int_0^\infty\lambda e^{-it\lambda^{4}}\varphi_j(\lambda)F^\pm(\lambda|x-y|)d\lambda\\
&=2^{2j}\int_0^\infty\sigma e^{-it2^{4j}\sigma^{4}}\varphi_0(\sigma)F^\pm(2^j\sigma|x-y|)d\sigma.
\end{align*}
Let $\tilde{F}^\pm(\lambda|x-y|)=e^{\mp i\lambda|x-y|}F^\pm(\lambda|x-y|)$, then we have
\begin{equation}\label{K-1j:scaling}
\begin{split}
K_{1,j}^\pm(t,x,y)=&2^{2j}\int_0^\infty\sigma e^{-it2^{4j}\sigma^{4}\pm i2^j\sigma|x-y|}\\
&\quad \times\varphi_0(\sigma)\tilde{F}^\pm(2^j\sigma|x-y|)d\sigma.
\end{split}
\end{equation}
To prove \eqref{K-1j:scaling}, we consider two cases according to $|x-y|\leq2^{-j}$ and $|x-y|\geq2^{-j}$.

{\bf Case 1.} For $|x-y|\leq2^{-j}$, we further consider two subcases according to $|t|\leq2^{-4j}$ and $|t|\geq2^{-4j}$. If $|t|\leq2^{-4j}$, then, since the support of $\varphi_0$ is contained in $[1/4,1]$, we have
\begin{equation*}
|K_{1,j}^\pm(t,x,y)|\lesssim2^{2j}\int_{1/4}^1|\sigma\tilde{F}^\pm(2^j\sigma|x-y|)|d\sigma\lesssim2^{2j}(1+2^{4j}|t|)^{-1}.
\end{equation*}
If $|t|\geq2^{-4j}$, then, by integration by parts, we have
\begin{equation*}
K_{1,j}^\pm(t,x,y)=\frac{2^{2j}}{4it2^{4j}}\int_{1/4}^1e^{-it2^{4j}\sigma^{4}}\partial_s\left(\sigma^{-2}e^{\pm i2^j\sigma|x-y|}\varphi_0(\sigma)\tilde{F}^\pm(2^j\sigma|x-y|)\right)d\sigma.
\end{equation*}
Since the support of $\varphi_0$ is contained in $[1/4,1]$, it follows from \eqref{F} and \eqref{rep:hankel} that
\begin{equation*}
|\tilde{F}^\pm(2^j\sigma|x-y|)|+|\partial_\sigma\tilde{F}^\pm(2^j\sigma|x-y|)|\lesssim1, \quad\forall x,y\in\R^2.
\end{equation*}
Hence, by Lemma \ref{lem:vdc} (Van der Corput's lemma), we obtain
\begin{equation}\label{bd-K1j}
|K_{1,j}^\pm(t,x,y)|\lesssim2^{2j}(1+2^{4j}|t|)^{-1},\quad \forall x,y\in\R^2.
\end{equation}

{\bf Case 2.} For $|x-y|\geq2^{-j}$, we consider $K_{1,j}^-(t,x,y)$ and $K_{1,j}^+(t,x,y)$ separately. For $K_{1,j}^-(t,x,y)$, if $|t|\leq2^{-4j}$, then we have 
\begin{equation*}
|K_{1,j}^-(t,x,y)|\lesssim2^{2j}(1+2^{4j}|t|)^{-1},\quad\forall x,y\in\R^2.
\end{equation*}
If $|t|\geq 2^{-4j}$, then we write
\begin{equation*}
K_{1,j}^-(t,x,y)=2^{2j}\int_0^\infty\sigma e^{-i(t2^{4j}\sigma^{4}+2^j\sigma|x-y|)}\varphi_0(\sigma)\tilde{F}^-(2^j\sigma|x-y|)d\sigma.
\end{equation*}
Let $\phi_\pm(\sigma)=t2^{4j}\sigma^{4}\pm2^j\sigma|x-y|$, then we have (remember that supp$\varphi_0(\sigma)\subset[1/4,1]$)
\begin{equation*}
|\partial_\sigma\phi_+(\sigma)|=|4t2^{4j}\sigma^{3}+2^j|x-y||\gtrsim1+2^{4j}|t|,
\end{equation*}
which implies that the phase function $\phi_+(\sigma)$ is regular in $\sigma\in[1/4,1]$. By Lemma \ref{lem:vdc}, we have
\begin{equation*}
|K_{1,j}^-(t,x,y)|\lesssim2^{2j}(1+2^{4j}|t|)^{-1},\quad\forall x,y\in\R^2.
\end{equation*}
For $K_{1,j}^+(t,x,y)$, if $|t|\leq2^{-4j}$, then, similarly, we have 
\begin{equation*}
|K_{1,j}^+(t,x,y)|\lesssim2^{2j}(1+2^{4j}|t|)^{-1},\quad\forall x,y\in\R^2.
\end{equation*}
If $|t|\geq2^{-4j}$, then we compute $\partial_\sigma\phi_-(\sigma)=4t2^{4j}\sigma^{3}-2^j|x-y|$, which implies that the phase function $\phi_-(\sigma)$ of the oscillatory integral
\begin{equation*}
K_{1,j}^+(t,x,y)=2^{2j}\int_0^\infty\sigma e^{-i(t2^{4j}\sigma^{4}-2^j\sigma|x-y|)}\varphi_0(\sigma)\tilde{F}^+(2^j\sigma|x-y|)d\sigma
\end{equation*}
admits a critical point $\sigma_0$ in $[1/4,1]$ (i.e. $\sigma_0=2^{-j-\frac{2}{3}}\sqrt[3]{\frac{|x-y|}{t}}$).
Let $j_0=[\frac{1}{3}\log_2\frac{|x-y|}{|t|}]$, then we have $-2+j<\frac{1}{3}\log_2\frac{\Phi(z)}{t}<3+j$ when $|j-j_0|<2$, which implies that such a critical point $\sigma_0$ does exist. 

One can check that for $\sigma\in[1/4,1]$
\begin{equation*}
\partial_\sigma^2\phi_-(\sigma)=12t2^{4j}\sigma^2\gtrsim2^{4j}|t|,
\end{equation*}
so it follows by Lemma \ref{lem:vdc} that
\begin{equation*}
|K_{1,j}^+(t,x,y)|\lesssim2^{2j}(2^{4j}|t|)^{-\frac{1}{2}}\left(\varphi_0(1)\tilde{F}^+(2^j|x-y|)+\int_{1/4}^1\left|\partial_\sigma\left(\sigma\varphi_0(\sigma)\tilde{F}^+(2^j\sigma|x-y|)\right)\right|ds\right).
\end{equation*}
Since $\varphi_0(1)=0$ and 
\begin{equation*}
|\tilde{F}^+(2^j\sigma|x-y|)|+|\partial_\sigma\tilde{F}^+(2^j\sigma|x-y|)|\lesssim1, \quad\forall x,y\in\R^2,
\end{equation*}
we obtain
\begin{equation*}
|K_{1,j}^+(t,x,y)|\lesssim2^{2j}(2^{4j}|t|)^{-\frac{1}{2}}\lesssim2^{2j}(1+2^{4j}|t|)^{-\frac{1}{2}}.
\end{equation*}
When $|j-j_0|\geq2$, the phase function $\phi_-(\sigma)$ is regular in $[1/4,1]$. By Lemma \ref{lem:vdc} again, we obtain 
\begin{equation*}
|K_{1,j}^+(t,x,y)|\lesssim2^{2j}(1+2^{4j}|t|)^{-\frac{1}{2}}.
\end{equation*}
Collecting {\bf Case 1} and {\bf Case 2}, we finish the proof of \eqref{reduce} for $\ell=1$.

It remains to verify \eqref{reduce} for $\ell=2$, i.e.
\begin{equation*}
K_{2,j}^\pm(t,x,y)=\int_0^\infty\lambda e^{-it\lambda^{4}}\varphi_j(\lambda)G^\pm(\lambda|x-y|)d\lambda,
\end{equation*}
where $G^\pm(\cdot)$ is given by \eqref{G}. The proof is similar to the case $\ell=1$ (with some modifications), we give the details for the convenience of the readers.
Let $\lambda=2^j\sigma$, then we have
\begin{equation*}
K_{2,j}^\pm(t,x,y)=2^{2j}\int_0^\infty\sigma e^{-it2^{4j}\sigma^{4}}\varphi_0(\sigma)G^\pm(2^j\sigma|x-y|)d\sigma.
\end{equation*}
Let $\tilde{G}^\pm(\lambda|x-y|)=e^{\mp i\lambda|x-y|}G^\pm(\lambda|x-y|)$, then we have
\begin{equation}\label{K-1j:scaling}
\begin{split}
K_{2,j}^\pm(t,x,y)=&2^{2j}\int_0^\infty\sigma e^{-it2^{4j}\sigma^{4}\pm i2^j\sigma|x-y|}\\
&\varphi_0(\sigma)\tilde{G}^\pm(2^j\sigma|x-y|)d\sigma.
\end{split}
\end{equation}
By the elementary analysis, there exists a finite constant $s_0\in(0,\infty)$ such that
\begin{equation*}
G^\pm(\lambda|x-y|)=\left(H_0^\pm(\lambda|\mathbf{n}(s_0)|)-H_0^\pm(i\lambda|\mathbf{n}(s_0)|)\right)\int_0^\infty B_\alpha(s,\theta_x,\theta_y)ds.
\end{equation*}
From Proposition \ref{prop:res-2}, we know (see also \cite[(3.7)]{FZZ23})
\begin{equation*}
\int_0^\infty|B_\alpha(s,\theta_x,\theta_y)|ds\lesssim1.
\end{equation*}
To prove \eqref{K-1j:scaling}, we consider two cases according to $|x-y|\leq2^{-j}$ and $|x-y|\geq2^{-j}$.

{\bf Case 1.} For $|x-y|\leq2^{-j}$, we further consider two subcases according to $|t|\leq2^{-4j}$ and $|t|\geq2^{-4j}$. If $|t|\leq2^{-4j}$, then, due to supp$\varphi_0\subset[1/4,1]$ and the fact $|\mathbf{n}(s_0)|\geq|x-y|$, we have
\begin{equation*}
|K_{2,j}^\pm(t,x,y)|\lesssim\int_{1/4}^1|\sigma G^\pm(2^j\sigma|x-y|)|d\sigma\lesssim2^{2j}(1+2^{4j}|t|)^{-1}.
\end{equation*}
If $|t|\geq2^{-4j}$, then, by integration by parts, we have
\begin{equation*}
K_{2,j}^\pm(t,x,y)=\frac{2^{2j}}{4it2^{4j}}\int_{1/4}^1e^{-it2^{4j}\sigma^{4}}\partial_\sigma\left(\sigma^{-2}e^{\pm i2^j\sigma|x-y|}\varphi_0(\sigma)\tilde{G}^\pm(2^j\sigma|x-y|)\right)d\sigma.
\end{equation*}
Noting that supp$\varphi_0\subset[1/4,1]$, it follows from \eqref{G} and \eqref{rep:hankel} that
\begin{align*}
&|\tilde{G}^\pm(2^j\sigma|x-y|)|=\bigg|e^{\mp i2^j\sigma|x-y|}\int_0^\infty\\
&\left(H_0^\pm(2^j\sigma|\mathbf{n}(s)|)-H_0^\pm(i2^j\sigma|\mathbf{n}(s)|)\right)B_\alpha(s,\theta_x,\theta_y)ds\bigg|
\end{align*}
and
\begin{equation*}
\begin{split}
&|\partial_\sigma\tilde{G}^\pm(2^j\sigma|x-y|)|\\
&=\bigg|\mp i2^j|x-y|e^{\mp i2^j\sigma|x-y|}\int_0^\infty\left(H_0^\pm(2^j\sigma|\mathbf{n}(s)|)-H_0^\pm(i2^j\sigma|\mathbf{n}(s)|)\right)B_\alpha(s,\theta_x,\theta_y)ds\\
&+e^{\mp i2^j\sigma|x-y|}\int_0^\infty\partial_\sigma\left(H_0^\pm(2^j\sigma|\mathbf{n}(s)|)-H_0^\pm(i2^j\sigma|\mathbf{n}(s)|)\right)B_\alpha(s,\theta_x,\theta_y)ds\bigg|
\end{split}
\end{equation*}
are both bounded by some constant uniformly in $x,y\in\R^2$.

Hence, by Lemma \ref{lem:vdc} (Van der Corput's lemma), we obtain
\begin{equation}\label{bd-K2j}
|K_{2,j}^\pm(t,x,y)|\lesssim2^{2j}(1+2^{4j}|t|)^{-1},\quad \forall x,y\in\R^2.
\end{equation}

{\bf Case 2.} For $|x-y|\geq2^{-j}$, we consider $K_{2,j}^-(t,x,y)$ and $K_{1,j}^+(t,x,y)$ separately. For $K_{2,j}^-(t,x,y)$, if $|t|\leq2^{-4j}$, then we have
\begin{equation*}
|K_{2,j}^-(t,x,y)|\lesssim2^{2j}(1+2^{4j}|t|)^{-1},\quad\forall x,y\in\R^2.
\end{equation*}
If $|t|\geq 2^{-4j}$, then we write
\begin{equation*}
K_{2,j}^-(t,x,y)=2^{2j}\int_0^\infty\sigma e^{-i(t2^{4j}\sigma^{4}+2^j\sigma|x-y|)}\varphi_0(\sigma)\tilde{G}^-(2^j\sigma|x-y|)d\sigma.
\end{equation*}
Recall $\phi_\pm(\sigma)=t2^{4j}\sigma^{4}\pm2^j\sigma|x-y|$, then we have (remember that supp$\varphi_0(\sigma)\subset[1/4,1]$)
\begin{equation*}
|\partial_\sigma\phi_+(\sigma)|=|4t2^{4j}\sigma^{3}+2^j|x-y||\gtrsim1+2^{4j}|t|,
\end{equation*}
which implies that the phase function $\phi_+(\sigma)$ is regular in $\sigma\in[1/4,1]$. By Lemma \ref{lem:vdc}, we have
\begin{equation*}
|K_{2,j}^-(t,x,y)|\lesssim2^{2j}(1+2^{4j}|t|)^{-1},\quad\forall x,y\in\R^2.
\end{equation*}
For $K_{2,j}^+(t,x,y)$, if $|t|\leq2^{-4j}$, then, similarly, we have
\begin{equation*}
|K_{2,j}^+(t,x,y)|\lesssim2^{2j}(1+2^{4j}|t|)^{-1},\quad\forall x,y\in\R^2.
\end{equation*}
If $|t|\geq2^{-4j}$, then we compute $\partial_\sigma\phi_-(\sigma)=4t2^{4j}\sigma^{3}-2^j|x-y|$, which implies that the phase function $\phi_-(\sigma)$ of the oscillatory integral
\begin{equation*}
K_{2,j}^+(t,x,y)=2^{2j}\int_0^\infty\sigma e^{-i(t2^{4j}\sigma^{4}-2^j\sigma|x-y|)}\varphi_0(\sigma)\tilde{G}^+(2^j\sigma|x-y|)d\sigma
\end{equation*}
admits a critical point $\sigma_0$ in $[1/4,1]$.
One can check as before that for $\sigma\in[1/4,1]$
\begin{equation*}
\partial_\sigma^2\phi_-(\sigma)=12t2^{4j}\sigma^2\gtrsim2^{4j}|t|,
\end{equation*}
so it follows by Lemma \ref{lem:vdc} that
\begin{equation*}
\begin{split}
|K_{2,j}^+(t,x,y)|\lesssim&2^{2j}(2^{4j}|t|)^{-\frac{1}{2}}\bigg(\varphi_0(1)\tilde{G}^+(2^j|x-y|)\\
&+\int_{1/4}^1\left|\partial_\sigma\left(\sigma\varphi_0(\sigma)\tilde{G}^+(2^j\sigma|x-y|)\right)\right|ds\bigg).
\end{split}
\end{equation*}
Since $\varphi_0(1)=0$ and
\begin{equation*}
|\tilde{G}^+(2^j\sigma|x-y|)|+|\partial_\sigma\tilde{G}^+(2^j\sigma|x-y|)|\lesssim1, \quad\forall x,y\in\R^2,
\end{equation*}
we obtain
\begin{equation*}
|K_{2,j}^+(t,x,y)|\lesssim2^{2j}(2^{4j}|t|)^{-\frac{1}{2}}\lesssim2^{2j}(1+2^{4j}|t|)^{-\frac{1}{2}}.
\end{equation*}
When $|j-j_0|\geq2$ (with $j_0=[\frac{1}{3}\log_2\frac{|x-y|}{|t|}]$ as before), the phase function $\phi_-(\sigma)$ is regular in $[1/4,1]$. By Lemma \ref{lem:vdc} again, we obtain
\begin{equation*}
|K_{2,j}^+(t,x,y)|\lesssim2^{2j}(1+2^{4j}|t|)^{-\frac{1}{2}}.
\end{equation*}
Therefore, we finish the proof of \eqref{reduce} for $\ell=2$.

Above all, the proof of this lemma is completed.
\end{proof}


\begin{remark}
The expected dispersive estimate \eqref{dis:S} follows immediately from Lemma \ref{lem:reduce}. Indeed, from \eqref{stone}, \eqref{K-1j},\eqref{K-2j} and Proposition \ref{prop:res-m}, we have 
\begin{equation*}
\begin{split}
K(x,y):&=e^{-it\mathcal{L}_{\mathbf{A},4}}(x,y)\\
&=\sum_{j\in\mathbb{Z}}\left(K_{1,j}^\pm(x,y)+K_{2,j}^\pm(x,y)\right).
\end{split}
\end{equation*}
Now from Lemma \ref{lem:reduce}, we obtain
\begin{equation*}
\begin{split}
|K(x,y)|&\leq\left(\sum_{|j-j_0|\leq2}+\sum_{|j-j_0|\geq2}\right)(K_{1,j}^\pm(x,y)+K_{2,j}^\pm(x,y))\\
&\lesssim\sum_{|j-j_0|\leq2}2^{2j}(1+2^{4j}|t|)^{-\frac{1}{2}}+\sum_{|j|\geq j'_0+1}2^{2j}(1+2^{4j}|t|)^{-1}\\
&\lesssim\sum_{|j-j_0|\leq2}|t|^{-\frac{1}{2}}+\sum_{j\leq j'_0}2^{2j}+|t|^{-1}\sum_{j\geq j'_0+1}2^{-2j}\\
&\lesssim|t|^{-\frac{1}{2}},
\end{split}
\end{equation*}
where $j'_0\in\mathbb{Z}$ is a fixed constant satisfying $2^{4j'_0}|t|\sim1$. 
\end{remark}

\begin{center}

\end{center}

\end{document}